\title{Studies on the Pea Pattern Sequence}
\author{André Pedroso Kowacs }
\date{2017}
\newtheorem{theorem}{Theorem}
\newtheorem{corollary}{Corollary}
\newtheorem{lemma}{Lemma}
\begin{document}
 
\maketitle
\section{Abstract}
In this paper we formally define the family of sequences known as ``Pea Pattern sequences''. We then analyze its behaviour and conditions for fixed points. The paper ends with a list of fixed points from base $2$ to $10$.
\section{Introduction}
This paper was inspired in the works by John Conway on the sequence [A005150], also known as the ``Look and Say sequence'' (see \cite{OEIS}). The Pea Pattern sequence, though similarly defined, holds very different properties. It was described in \cite{e3} for some of its cases. The existence of fixed points and cycles in this sequence is the main object of study in this paper, which also compiles every fixed point for the sequences in bases 2 to 10.

\section{Preliminaries}
Note that the correspondence between the natural numbers in some base $k\in\mathbb{N}$ and the set of finite words $\Sigma_k^*$ over the alphabet $\Sigma_k = \{0,1,...,k-1\}$ will be used throughout the paper, as no confusion should arise from it.
The concatenation operator shall be denoted by the symbol $||$, although it will generally be omitted.\\\\
Given $x_0 \in \Sigma_k^*$, define the sequence $(x_n)_n$ recursively by 
\[
x_{n+1}=\mathcal{P}_k(x_n)=a_{(n_1,1)}a_{(n_1-1,1)}...a_{(0,1)}b_1a_{(n_2,2)}...a_{(0,2)}b_2...a_{(n_r,r)}...a_{(0,r)}b_r,
\]
where $(a_{(n_j,j)}...a_{(0,j)})_k=|x_n|_{b_j},$ for $1\leq j\leq r$, with $|x_n|_{b_j} \neq 0,\;1\leq j\leq r$ and $b_1>b_2>...>b_r$. Here $(a_{(n_j,j)}...a_{(0,j)})_k$ denotes the natural number given by the string $a_{(n_j,j)}...a_{(0,j)}$ in base $k$ representation.\\
For example, for $x_0 = 123 \in \Sigma_{10}^*$, we have that $x_1 = 131211$, $x_2 = 131241,\, \text{etc}...$
\begin{theorem}
For all $x_0 \in \Sigma_k$, the sequence defined by $x_{n+1} = \mathcal{P}_k(x_n)$ converges to a fixed point or a cycle.\\
\end{theorem}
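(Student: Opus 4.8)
The plan is to exploit the elementary observation already hinted at in the introduction: the base-$k$ representation of a natural number is, for large numbers, much shorter than the number itself. This forces $\mathcal{P}_k$ to be length-decreasing on long words, confines the orbit to a finite set of short words, and then lets the pigeonhole principle finish the job.

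\textbf{Step 1 (length bound).} First I would bound $|\mathcal{P}_k(x)|$ in terms of $L := |x|$. In the word $\mathcal{P}_k(x)$, each distinct symbol $b_j$ occurring in $x$ contributes the digits of $|x|_{b_j}$ written in base $k$, followed by the single letter $b_j$. Since $1 \le |x|_{b_j} \le L$, the base-$k$ representation of $|x|_{b_j}$ has at most $\lfloor \log_k L\rfloor + 1$ digits, and there are at most $k$ distinct symbols, i.e. $r \le k$. Hence
\[
  |\mathcal{P}_k(x)| \;\le\; r\bigl(\lfloor \log_k L\rfloor + 2\bigr) \;\le\; k\bigl(\lfloor \log_k L\rfloor + 2\bigr).
\]

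\textbf{Step 2 (an absorbing bound).} Since $k\bigl(\log_k L + 2\bigr)$ grows strictly slower than $L$, there is a constant $M = M(k)$ with $k\bigl(\lfloor\log_k L\rfloor+2\bigr) < L$ for every $L > M$; set $B := \max\{M,\ k(\lfloor\log_k M\rfloor+2)\}$. Then, by Step 1, $|x| > B$ implies $|\mathcal{P}_k(x)| < |x|$, while $|x| \le B$ implies $|\mathcal{P}_k(x)| \le B$. Consequently, for any $x_0 \in \Sigma_k^*$, the integer sequence $\bigl(|x_n|\bigr)_n$ is strictly decreasing as long as it stays above $B$, so after finitely many steps it drops to a value $\le B$, and from then on it never exceeds $B$ again.

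\textbf{Step 3 (finiteness and pigeonhole).} Let $n_0$ be such that $|x_n| \le B$ for all $n \ge n_0$. All of $x_{n_0}, x_{n_0+1}, \dots$ then lie in the \emph{finite} set $S_B = \{\, w \in \Sigma_k^* : |w| \le B \,\}$, which has at most $1 + k + \dots + k^B$ elements. By pigeonhole, $x_i = x_j$ for some $n_0 \le i < j$; since $x_{n+1}$ depends deterministically on $x_n$, induction gives $x_{n+p} = x_n$ for all $n \ge i$, where $p := j - i \ge 1$. If $p = 1$, the sequence has reached a fixed point of $\mathcal{P}_k$; if $p \ge 2$, it has entered a cycle — which is precisely the stabilization claimed.

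\textbf{Main obstacle.} The only genuinely delicate part is Steps 1--2: making the crude digit count sharp enough, and uniform over all words of a given length, to guarantee that the threshold $M(k)$ exists for each $k$ (the degenerate value $k=1$ should be excluded, and the smallest values of $k$ can be verified by hand). Everything after that is the standard eventual-periodicity argument for the iteration of a self-map of a finite set.
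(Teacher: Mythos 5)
Your proof is correct and follows essentially the same route as the paper: bound $|\mathcal{P}_k(x)|$ by a logarithmic function of $|x|$, deduce that long words shrink and that a bounded absorbing region exists, then apply pigeonhole on the finite set of short words to get eventual periodicity. The only differences are cosmetic: you leave the threshold as an abstract constant $B(k)$ (and make the pigeonhole step explicit), whereas the paper computes the explicit bound $\lceil 2k^2/(k-1)\rceil$ via the inequality $\lfloor\log_k r\rfloor \leq r/k^2+1$ and leaves the periodicity argument implicit.
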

\begin{proof}
Note that there exists a finite number of words of up to a given length, so it suffices to show that the terms in the sequence are bounded in length. Indeed, first notice that the numbers of letters in $x_{n+1} \in \Sigma_k$ is given by
\[|x_{n}| = \lfloor\log_k{x_{n}}\rfloor+1.\]
But by definition
\begin{align*}
|x_{n+1}|&=|(|x_n|_{k-1})_k \,|+| (k-1) \,|+| \, ... \, |+| \,(|x_n|_0)_k \, |+| \,0|\\
&\leq|(|x_n|_{k-1})_k \,|+|\, ... \, |+|\,  (|x_n|_0)_k| + k\\
&\leq {k|({|x_n|})_k|}+k = k\lfloor\log_k{{|x|}}\rfloor+2k,
\end{align*}
therefore
\begin{equation}\label{eq1}
|x_{n+1}| \leq k\lfloor\log_k{{|x|}}\rfloor+2k.
\end{equation}
From Lemma \ref{lemma}, we know that the inequality
\[\lfloor \log_k{r}\rfloor \leq r/k^2 +1,\]
holds for all natural numbers $r$, so we have that \eqref{eq1} implies that
\begin{equation}
|x_{n+1}| \leq \frac{|x_n|}{k}+3k.
\end{equation}
Therefore, for $|x_n| > \frac{3k^2}{k-1}$, we have that
\[|x_{n+1}|<|x_n|,\]
so that the sequence is decreasing in length, which means that for large enough $n$ we have that $|x_n| \leq  \frac{3k^2}{k-1}$, in which case
\[|x_{n+1}| \leq \frac{|x_n|}{k}+3k \leq \frac{3k}{k-1}+3k = \frac{3k^2}{k-1}=:\alpha_k\]
Therefore, for $l = \max\{|x_0|,\lceil \alpha_k\rceil\}$ we have that $|x_n| \leq l$, for all $n \in \mathbb{N}$
\end{proof}
\begin{lemma}\label{lemma}
    For $k\geq 2$ and all $r\in\mathbb{N}$ we have that
    \[\lfloor \log_k{r}\rfloor \leq r/k^2 +1,\]
\end{lemma}
\begin{proof}
    Indeed, notice that since $\lfloor \log_k{r}\rfloor=p$, for $k^p\leq r<k^{p+1}$ it suffices to prove the claim for $r=k^p$, where $p=0,1,2,\dots$. For $p=0,1,2$ the inequality is clear, so we may assume $p\geq 3$. Hence, we need to show that
    \begin{equation*}
        p\leq k^{p-2}+1,
    \end{equation*}
    for $p\geq 3$ and $k\geq 2$. By taking the logarithm on both sides of the inequality, this is equivalent to 
    $$(p-s)\log(k)\geq \log(p-1).$$
    Differentiating the function 
    $$f(x):=\frac{\log(x-1)}{x-2},$$
    we find that it is decreasing for $x\geq 3$. Hence, 
    \begin{equation*}
        \frac{\log(p-1)}{p-2}\leq f(3)=\log(2)\leq \log(k),
    \end{equation*}
    for all $k\geq 2$. (Thank you to Sparkle-Lin for the proof on MathStackExchange).
\end{proof}

This theorem also tells us $|x_n| \leq \lceil\alpha_k\rceil$ for all $n>N \in \mathbb{N}$ sufficiently large.
Thus one would only need to check for words of length less than $\lceil \alpha_k\rceil$ for fixed points in $\Sigma^*_k$. Since there are $k$ letters in $\Sigma_k$, that results in a total of\[ \frac{k^{\lceil \alpha_k\rceil}-1}{k-1}\]
different words, meaning the number of words to be tested grows exponentially with $k$. Theorem \ref{teo2} reduces the number of words to be tested, as follows.
\begin{theorem}\label{teo2}
Let $\overline{x} \in \Sigma_k^*$ be a fixed point of the sequence defined by $x_{n+1} = \mathcal{P}_k(x_n)$, such that $\overline{x}$ is given by the string
\[
\overline{x} = a_{(n_1,1)}a_{(n_1-1,1)}...a_{(0,1)}b_1a_{(n_2,2)}...a_{(0,2)}b_2...a_{(n_r,r)}...a_{(0,r)}b_r,
\]
where $a_{(i,j)} \in \Sigma_k,$ for all $i< n_j,$ and $a_{(n_j,j)} \in \Sigma_k\backslash\{0\}$ and $0\leq n_j$ for $ 1\leq j\leq r.$\\
Then
\begin{equation}
\sum_{j=1}^{r}n_j \geq \sum_{j=1}^{r}k^{n_j}-2r.
\end{equation}
\end{theorem}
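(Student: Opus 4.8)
The plan is to compute the length $|\overline{x}|$ in two different ways and compare the results. First I would read off from the displayed form of $\overline{x}$ that each block consists of the $n_j+1$ digits $a_{(n_j,j)}\cdots a_{(0,j)}$ followed by the single letter $b_j$, hence contributes exactly $n_j+2$ symbols, so that
\[
|\overline{x}| = \sum_{j=1}^{r}(n_j+2) = \sum_{j=1}^{r} n_j + 2r .
\]

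Next I would identify the letters that actually occur in $\overline{x}$. Since $\overline{x}$ is a fixed point, $\overline{x}=\mathcal{P}_k(\overline{x})$, and by the definition of $\mathcal{P}_k$ the word on the right-hand side contains one block for each distinct letter present in $\overline{x}$, these letters being precisely $b_1>b_2>\dots>b_r$. Consequently every symbol of $\overline{x}$ is an occurrence of one of $b_1,\dots,b_r$, which gives the second expression for the length,
\[
|\overline{x}| = \sum_{j=1}^{r} |\overline{x}|_{b_j}.
\]
Then I would use the fixed-point equation block by block: $|\overline{x}|_{b_j} = (a_{(n_j,j)}a_{(n_j-1,j)}\cdots a_{(0,j)})_k = \sum_{i=0}^{n_j} a_{(i,j)}\,k^{i}$, and since the leading digit satisfies $a_{(n_j,j)}\in\Sigma_k\setminus\{0\}$, i.e. $a_{(n_j,j)}\geq 1$, this number is at least $k^{n_j}$. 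Summing over $j$ and combining with the first length count yields
\[
\sum_{j=1}^{r} n_j + 2r = |\overline{x}| = \sum_{j=1}^{r}|\overline{x}|_{b_j} \geq \sum_{j=1}^{r} k^{n_j},
\]
which rearranges into the claimed inequality $\sum_{j=1}^{r} n_j \geq \sum_{j=1}^{r} k^{n_j} - 2r$.

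The main point to be careful about is the identity $|\overline{x}| = \sum_{j} |\overline{x}|_{b_j}$: one must be certain that no symbol lying outside $\{b_1,\dots,b_r\}$ appears among the digit positions $a_{(i,j)}$. This is exactly where the fixed-point hypothesis is indispensable — for an arbitrary word the digit-letters could introduce fresh symbols and the identity would fail — whereas for $\overline{x}=\mathcal{P}_k(\overline{x})$ the construction of $\mathcal{P}_k$ forces $b_1,\dots,b_r$ to enumerate every letter occurring in $\overline{x}$, so the sum really is taken over a partition of all symbols of $\overline{x}$. Everything else in the argument is elementary bookkeeping on digit lengths in base $k$.
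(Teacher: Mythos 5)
Your proposal is correct and follows essentially the same argument as the paper: count $|\overline{x}|=\sum_j n_j+2r$ from the block structure, use the fixed-point equation to get $|\overline{x}|_{b_j}=\sum_i a_{(i,j)}k^i\geq k^{n_j}$, and sum over $j$ using $\sum_j|\overline{x}|_{b_j}=|\overline{x}|$. If anything, you are slightly more explicit than the paper about why the letters $b_1,\dots,b_r$ exhaust all symbols of $\overline{x}$, which is a welcome clarification but not a different method.
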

\begin{proof}
Initially, note that
\begin{equation}\label{eqbar}
|\overline{x}|=(n_1+1)+1+(n_2+1)+1+...+(n_r+1)+1=\sum_{j=1}^{r}n_j+2r.
\end{equation}
Also, since $\overline{x}$ is a fixed point, $\mathcal{P}_k(\overline{x})=\overline{x}$, therefore
\begin{gather*}
a_{(n_1,1)} \cdot k^{n_1}+...+a_{(0,1)} \cdot k^{0} = |\overline{x}|_{b_1}\\
a_{(n_2,2)} \cdot k^{n_2}+...+a_{(0,2)} \cdot k^{0} = |\overline{x}|_{b_2}\\
\vdots\\
a_{(n_r,r)} \cdot k^{n_r}+...+a_{(0,r)} \cdot k^{0} = |\overline{x}|_{b_r}.
\end{gather*}
Adding all these equalities yields
\begin{equation*}
    \sum_{j=1}^{r}\left(a_{(n_j,j)} \cdot k^{n_j}+...+a_{(0,j)} \cdot k^{0}\right) = |\overline{x}|.
\end{equation*}
Since $0\leq a_{(i,j)}$, we have that
\[
\sum_{j=1}^{r}\left(a_{(n_j,j)} \cdot k^{n_j}\right) \leq |\overline{x}|.
\]
Moreover, since $1\leq a_{(n_j,j)}$ this implies that
\[
\sum_{j=1}^{r}k^{n_j}\leq |\overline{x}|.
\]
Equation \eqref{eqbar} then yields
\[
\sum_{j=1}^{r}k^{n_j}\leq \sum_{j=1}^{r}n_j+2r.
\]
Thus,
\[
\sum_{j=1}^{r}n_j \geq \sum_{j=1}^{r}k^{n_j}\ -2r.
\]
\end{proof}
This result can be generalized as follows.

For $i=1,...,p$, denote by $\overline{x}_i$ the words contained in a $p$-cycle, such that, for $i=1,...,p$ we have that
\[
\overline{x}_i = a_{(n_{(1,i)},1,i)}a_{(n_{(1,i)}-1,1,i)}...a_{(0,1,i)}b_1a_{(n_{(2,i)},2,i)}...a_{(0,2,i)}b_2...a_{(n_{(r,i)},r_i,i)}...a_{(0,r,i)}b_{r}.
\]
\begin{theorem}
Let $\overline{x}_i$ be the words in a $p$-cycle in $\Sigma_k^*$. Then
\[
\sum_{i=1}^{p}\sum_{j=1}^{r}n_{(j,i)} \geq \sum_{i=1}^{p}\sum_{j=1}^{r}k^{n_{(j,i)}}\ -2pr.
\]
\end{theorem}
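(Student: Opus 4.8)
The plan is to imitate the proof of Theorem 2, applying its key estimate to each word of the cycle separately and then summing, exploiting the fact that $\mathcal{P}_k$ permutes the $p$ words of the orbit cyclically. First I would fix the convention that $\mathcal{P}_k(\overline{x}_i) = \overline{x}_{i+1}$ with indices read modulo $p$, so that $\{\overline{x}_1,\dots,\overline{x}_p\}$ is exactly the orbit. For each fixed $i$, the block decomposition of $\overline{x}_i$ gives, exactly as in equation (2),
\[
|\overline{x}_i| = \sum_{j=1}^{r} n_{(j,i)} + 2r,
\]
and summing over $i=1,\dots,p$ yields $\sum_{i=1}^{p}|\overline{x}_i| = \sum_{i=1}^{p}\sum_{j=1}^{r} n_{(j,i)} + 2pr$.

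Next I would use the cycle relation to bound each $|\overline{x}_i|$ from below. Since $\mathcal{P}_k(\overline{x}_i) = \overline{x}_{i+1}$, the digit blocks of $\overline{x}_{i+1}$ encode the letter multiplicities of $\overline{x}_i$, so, as in the proof of Theorem 2,
\[
\sum_{j=1}^{r}\left( a_{(n_{(j,i+1)},j,i+1)}\cdot k^{n_{(j,i+1)}} + \cdots + a_{(0,j,i+1)}\cdot k^{0} \right) = \sum_{j=1}^{r}|\overline{x}_i|_{b_j} = |\overline{x}_i|.
\]
Discarding the lower-order digits ($a_{(l,j,i+1)}\geq 0$) and using the leading-digit bound $a_{(n_{(j,i+1)},j,i+1)}\geq 1$ gives $\sum_{j=1}^{r} k^{n_{(j,i+1)}} \leq |\overline{x}_i|$ for every $i$.

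Finally I would sum this last inequality over $i=1,\dots,p$. On the right we obtain $\sum_{i=1}^{p}|\overline{x}_i|$, which was already rewritten above; on the left, as $i$ runs through a complete residue system modulo $p$ so does $i+1$, hence $\sum_{i=1}^{p}\sum_{j=1}^{r} k^{n_{(j,i+1)}} = \sum_{i=1}^{p}\sum_{j=1}^{r} k^{n_{(j,i)}}$. Combining the two yields $\sum_{i=1}^{p}\sum_{j=1}^{r} k^{n_{(j,i)}} \leq \sum_{i=1}^{p}\sum_{j=1}^{r} n_{(j,i)} + 2pr$, which rearranges to the claimed bound. The only point that needs genuine care is this re-indexing of the cyclic sum — and, relatedly, the tacit assumption that the number of blocks $r$ is the same for all words of the cycle; if it is not, the argument goes through verbatim with $r$ replaced by $r_i$ inside each sum and $2pr$ replaced by $2\sum_{i=1}^{p} r_i$. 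Beyond this bookkeeping, everything is a direct transcription of the proof of Theorem 2, so I expect no real obstacle.
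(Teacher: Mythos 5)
Your proposal is correct and is essentially the argument the paper intends, since the paper's own proof of Theorem 3 is just the remark that it ``follows similarly to Theorem 2'': you apply the length identity and the leading-digit bound to each word of the orbit and sum, with the cyclic re-indexing $i \mapsto i+1$ (and, if needed, $r$ replaced by $r_i$) being exactly the bookkeeping the paper leaves implicit.
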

\begin{proof}
Analogous to the previous proof.
\end{proof}
As a consequence from these results, we can more easily calculate all the fixed points and cycles in a given alphabet.\\
However, it the previous result is hard to implement computation-wise, which motivated the following method:

In $\Sigma_k^*$, define the following equivalence relation:
$$w_1\sim w_2 \iff |w_1|_0=|w_2|_0, |w_1|_1=|w_2|_1, ... , |w_1|_{k-1}=|w_2|_{k-1}.$$
That is, $w_1$ and $w_2$ are equivalent if and only if $w_1$ is an anagram of $w_2$. Notice that $w_1\sim w_2 \implies \mathcal{P}_k(w_1) = \mathcal{P}_k(w_2)$. This is fundamental, since we can consider $\Sigma_k^*/\sim$ to be the set of equivalence classes in $\Sigma_k^*$ and therefore if $w \in [w_0]$, where $w_0$ is a fixed point of $\Sigma_k^*$, then $\mathcal{P}_k(w)=w_0\implies \mathcal{P}_k(\mathcal{P}_k(w))= \mathcal{P}_k(w)$, that is, to check if a class has a fixed point, it's sufficient to take any class representative $w$ and check if $\mathcal{P}_k(\mathcal{P}_k(w))= \mathcal{P}_k(w)$
To check for $p$ cycles, notice that if $w_p$ is in some $p$ cycle, then $\mathcal{P}_k(w_p)$ also is in the same $p$ cycle, therefore if we denote $\mathcal{P}_k\circ\mathcal{P}_k\circ...\circ\mathcal{P}_k$ by $\mathcal{P}_k^p$, we have that, if $w \in [w_p]$, then $\mathcal{P}_k(w) = \mathcal{P}_k^{p+1}(w)$. Obviously if $w_p$ is an $q$ cycle, with $q$ dividing $p$ (that includes $q=1$ i.e.: $w_p$ is a fixed point) then the equality above also holds. Thus to confirm $w_p$ is a $p-cycle$ we would have to check if $\mathcal{P}_k(w) \neq \mathcal{P}_k^{q+1}(w), q|p$\\
This drastically reduces the number of words needed to check to find all fixed points and cycles. Since we know all fixed points and cycles have length less then or equal to $\lceil \alpha_k\rceil$, it's only necessary to check classes of length less then or equal to $\lceil \alpha_k\rceil$.\\
To count how many classes up to length $l$, consider that words of length less than $l$ could be represented by words of length $l$ with leading null letters $\epsilon$ thus the number of ways to arrange the letters $\epsilon,0,1,...,(k-1)$ into words of length $l$ is given by ${k+l}\choose{l}$, though that includes the words $\epsilon\epsilon...\epsilon$ and $\epsilon\epsilon...\epsilon0=0, \epsilon\epsilon...\epsilon00=00,...$,so  removing these there are  ${{k+l}\choose{l}}-l-1$ words. So in, for  $\Sigma_k^*$ the number of classes needed to be checked to find all fixed points of cycles is
$${{k+\lceil \alpha_k\rceil}\choose{\lceil \alpha_k\rceil}}-\lceil \alpha_k\rceil-1 = \frac{(k+\lceil \alpha_k\rceil)!}{k!(\lceil \alpha_k\rceil)!}-\lceil \alpha_k\rceil-1.$$
A simple algorithm to compute a class representative for all classes up to length $l$ in $\Sigma_k^*$, is to order the letters in the following way: $\epsilon < 0<1<...<(k-1)$ and construct words with increasing digits, as follows:
$$\epsilon...\epsilon1,\epsilon...\epsilon2,...,\,\epsilon...\epsilon(k-1), \epsilon...00,\epsilon...01,...,(k-1)(k-1)...(k-1).$$
Note that this covers all possible classes, being necessary to remove only $l-1$ words. Note that although the list includes words with preceding zeros such as $0...001$, whenever there is some letter other than zero, the word is an anagram for a valid word, with no preceding zeros, so it can still be considered a class representative, in the sense that their image through $\mathcal{P}_k$ is the same.
\\
This process still requires immense computational power for base $10$. Luckily, inspired by \cite{e3} we have the following result:
\begin{corollary}
There is no fixed point $w$ in $\Sigma_{k}^*,\, k>4$ with $|w|\geq 2k+2$.
\end{corollary}
\begin{proof}
First, notice that 
\begin{equation}\label{sums}
|w| = \sum_{i=0}^{9}|w|_i=\sum_{|w|_i>0}|(|w|_i)_k|+1.   
\end{equation}
But also note that:\\
If $|w|_i =1 $ then $|(|w|_i)_k|+1=|w|_i+1\implies  |w|_i-|(|w|_i)_k|-1=-1$.\\
If $|w|_i = 2$ then $|(|w|_i)_k|+1=|w|_i\implies|w|_i-|(|w|_i)_k|-1=0$. \\
If $|w|_i \geq 3$ then $|(|w|_i)_k|+1\leq|w|_i-1\implies|w|_i-|(|w|_i)_k|-1\geq1$.\\
So in order for the summations in \eqref{sums} to be equal, for each $i$ such that $|w|_i\geq 3$, there have to be enough other letters $j_i's$ such that $|w|_{j_i}=1$.\\
We claim that if  $|w|_i\geq k$, then
$$|w|_i-|(|w|_i)_k|-1\geq k-3,$$
or equivalently,
$$|(|w|_i)_k|+1\leq  |w|_i-k+3.$$
This is also equivalent to proving that
$$|w|_i -\lfloor\log_k(|w|_i)\rfloor-2\geq k-3,\, \text{ for } |w|_i\geq k,$$
or that
$$|w|_i -\lfloor\log_k(|w|_i)\rfloor-k+1\geq 0,\, \text{ for } |w|_i\geq k.$$
Indeed, let
$$f(x) = x-\lfloor \log_k(x)\rfloor - k+1.$$
First note that
$$f(x)\geq \hat{f}(x) := x-\log_k(x)-k+1.$$
Next, notice that $f(k) = \hat{f}(k) = 0$ and that $$\frac{d}{dx}\hat{f}(x)  = 1-\frac{1}{x\ln(k)}\geq 1-\frac{k}{\ln(k)}>0,$$
for $x\geq k> 4$, so that $\hat{f}$ (and thus also $f$) is increasing and so that the inequality holds for $x\geq k$.
This means that if there exist $i\neq j$, such that $|w|_i,|w|_j\geq k$, then:
$$|w|_i-|(|w|_i)_k|-1\geq k-3, |w|_j-|(|w|_j)_k|-1\geq k-3.$$
Therefore, the sums in \eqref{sums} to be equal, the other terms must satisfy:
$$\sum_{l\neq i,j;|w|_l>0}[|w|_l-|(|w|_i)_k|-1]\leq -(2k-6).$$
But the sum above can only have at most $k-2$ terms (which must all correspond to the case $|w|_i = 1$), so:
$$-2 < -(k-2)\leq\sum_{l\neq i,j;|w|_l>0}[|w|_l-|(|w|_i)_k|-1]\leq -(2k-6)<-2$$ a contradiction. So there can only be one $i$ such that $|w|_i\geq k$.\\
We claim that if $|w|_i\geq k$ for some $i$, then $|w|_i<k^2$.\\
Suppose $|w|_i \geq k^2$ for some $i$. Then $|(|w|_i)_k|\geq 2$. We will show that
$$|w|_i-|(|w|_i)_k|-1\geq k^2-4,\text { for } |w|_i\geq k^2.$$
In fact, this is proved the same way as before, noting that this is equivalent to proving that
$$|w|_i -\lfloor\log_k(|w|_i)\rfloor-2\geq k^2-4,\, \text{ for } |w|_i\geq k.$$
And so it is enough to show that the function
$$g(x) = x - \lfloor\log_k(x)\rfloor -k^2+2$$
satisfies $g(k^2) = 0$ and is increasing.\\
From what we proved, it follows that if there exists $i$ such that $|w|_i\geq k$, then $|(|w_i|)_k| \leq 2$, and in this case it must hold that $|w|_j=1$, for all $j\neq i$. It follows then that $w$ must have at most $(2+1)+(1+1)\cdot(k-1)$ letters. So $|w|\leq 2k+1.$
\end{proof}
In particular, for $k=10$, all fixed points have length less than $22$.\\
Also, generalizing \cite{e3} we have:
\begin{corollary}
There is no fixed point $w\in\Sigma_k^*$, $|w|$ odd and $|w|<2k-2$
\end{corollary}
\begin{proof}
If every letter appears less then $k-1$ times, then $\mathcal{P}_j(w)$ is even, therefore if $|w|$ is odd, there is an odd number of letters $i$ appearing at least $k$ times. If more than 2 letters appear more than $k$ times, then $|w|>2k$, contradiction. So there can be only 1 letter appearing more than $k$ times. If $|w|_i\geq k,\,i>1$ then there is at least $k-1$ occurrences of $i$ counting other letters bigger then 1, and since $w$ is a fixed point that means there are at least $(k-1)\cdot2 = 2k-2$ letters in the word, contradiction. So $i=1$ (since $i=0$ is impossible). If $|w|_1=(10)_k$ or $(11)_k$, then $(k-2)$ occurrences of the letter 1 count other digits, thus $|w|\geq 3+2\cdot(k-2) = 2k-1$ (i.e: $1(k)1(k-1)\dots1(2)10(1)1(0)$), contradiction. If $|w|_1 \geq 12$, then $k$ occurrences of the letter 1 count other digits, which is impossible since there are only $k-1$ other digits, each with at most 1 letter 1 preceding.
\end{proof}

\newpage
\begin{table}[h!]
\centering
 \begin{tabular}{||c | c | c | c | c | c ||} 
 \hline
  \multicolumn{5}{||c||}{Fixed Points for $\Sigma^*_k$, with $k=$} \\
 \hline
 2 & 3 & 4 & 5 & 6 \\  [0.5ex] 
 \hline\hline
 111 & 22 & 22 & 22 & 22 \\
 1001110 &11110&1211110&14233221&14233221\\
  &12111&1311110&14331231&14331231  \\
  &101100&1312111&14333110&14333110 \\
  &1022120&23322110&23322110&15143331 \\ 
  &2211110&33123110&33123110&15233221 \\  
  &22101100&132211110&131211110&15331231 \\ 
  &&&141211110&15333110 \\ 
  &&&141311110&23322110 \\  
  &&&141312111&33123110\\  
  &&&1433223110&1433223110 \\  
  &&&14132211110&1514332231 \\  
  &&&&1533223110\\  
  &&&&14131211110 \\
  &&&&15131211110 \\
  &&&&15141311110 \\  
  &&&&15141312111 \\
  &&&&1514132211110 \\  [1ex]
 \hline
 \end{tabular}
 \caption{Fixed points for the first few values of $k$.}
\end{table}
\begin{table}[h!]
\centering
 \begin{tabular}{||c | c | c | c || } 
 \hline
  \multicolumn{4}{||c||}{Fixed Points ofr $\Sigma^*_k$, with $k=$} \\
 \hline
 7 & 8 & 9 & 10 \\  [0.5ex] 
 \hline\hline
  22& 22 & 22 & 22 \\
  14233221&14233221&14233221&14233221\\
  14331231&14331231&14331231& 14331231 \\
  14333110&14333110&14333110&14333110\\
  15143331&15143331&15143331&15143331 \\ 
  15233221&15233221&15233221&15233221 \\  
  15331231&15331231&15331231&15331231\\ 
  15333110&15333110&15333110&15333110 \\ 
  16143331&16143331&16143331&16143331\\  
  16153331&16153331&16153331&16153331\\  
  16233221&16233221&16233221&16233221\\  
  16331231&16331231&16331231&16331231\\  
  16333110&16333110&16333110&16333110\\  
  23322110&17143331&17143331&17143331 \\
  33123110&17153331&17153331&17153331\\
  1433223110&17163331&17163331&17163331\\  
  1514332231&17233221&17233221&17233221\\
  1533223110&17331231&17331231&17331231\\  
  1614332231&17333110&17333110&17333110\\ 
  1615332231&23322110&18143331&18143331\\ 
  1633223110&33123110&18153331&18153331\\
  &1433223110&18163331&18163331\\
  &1514332231&18173331&18173331\\
  &1533223110&18233221&18233221\\
  &1614332231&18331231&18331231\\
  &1615332231&18333110&18333110\\
  &1633223110&23322110&19143331\\
  &1714332231&33123110&19153331\\
  &1715332231&1433223110&19163331\\
  &1716332231&1514332231&19173331\\
  &1733223110&1533223110&19183331\\
  &16152423324110&1614332231&19233221\\
  &17152423324110&1615332231&19331231\\
  &17161524233241&1633223110&19333110\\
  &17162423324110&1714332231&33123110\\[1ex]

 \hline
 \end{tabular}
\end{table}
\begin{table}[h!]
\centering
 \begin{tabular}{||c| c | c || } 
 \hline
  \multicolumn{3}{||c||}{Fixed Points ofr $\Sigma^*_k$, with $k=$} \\
 \hline
  8&9 & 10 \\  [0.5ex] 
 \hline\hline
  161514131211110&1715332231&1433223110\\
  171514131211110&1716332231&1514332231\\
  171614131211110&1733223110&1533223110\\
 171615131211110&1814332231&1614332231\\
  171615141211110&1815332231&1615332231\\
  171615141311110&1816332231&1633223110\\
  171615141312111&1817332231&1714332231\\ 
  1716251423325110&1833223110&1715332231\\
  17161514132211110&16152423324110&1716332231\\
  &17152423324110&1733223110\\
  &17161524233241&1814332231\\
  &17162423324110&1815332231\\
  &18152423324110&1816332231\\
  &18161524233241&1817332231\\
  &18162423324110&1833223110\\
  &18171524233241&1914332231\\
  &18171624233241&1915332231\\
  &18172423324110&1916332231\\
  &1716251423325110&1917332231\\
  &1816251423325110&1918332231\\
  &1817162514233251&1933223110\\
  &1817162523325110&16152423324110\\
  &1817251423325110&17152423324110\\
  &17161514131211110&17161524233241\\
  &18161514131211110&17162423324110\\
  &18171514131211110&18152423324110\\
  &18171614131211110&18161524233241\\
  &18171615131211110&18162423324110\\
  &18171615141211110&18171524233241\\
  &18171615141311110&18171624233241\\
  &18171615141312111&18172423324110\\
  &181716251433325110&19152423324110\\
  &181726151423326110&19161524233241\\
  &1817161514132211110&19162423324110\\
&	&19171524233241\\
&	&19171624233241\\
&	&19172423324110\\
&	&19181524233241\\
&	&19181624233241\\
&	&19181724233241\\
&	&19182423324110\\
[1ex]
 \hline
 \end{tabular}
\end{table}
\newpage
\begin{table}[h!]
\centering
 \begin{tabular}{|| c || } 
 \hline
  \multicolumn{1}{||c||}{Fixed Points ofr $\Sigma^*_k$, with $k=$} \\
 \hline
 10 \\  [0.5ex] 
 \hline\hline
1716251423325110\\
1816251423325110\\
1817162514233251\\
1817162523325110\\
1817251423325110\\
1916251423325110\\
1917162514233251\\
 1917162523325110\\
 1917251423325110\\
 1918162514233251\\
 1918162523325110\\
 1918171625233251\\
 1918172514233251\\
 1918172523325110\\
 1918251423325110\\
 181726151423326110\\
191726151423326110\\
191817261423326110\\
191817261514233261\\
191817261523326110\\
191826151423326110\\
1817161514131211110\\
1917161514131211110\\
1918161514131211110\\
1918171514131211110\\
1918171614131211110\\
1918171615131211110\\
1918171615141211110\\
1918171615141311110\\
1918171615141312111\\
19182716151423327110\\
191817161514132211110\\
[1ex]
 \hline
 \end{tabular}
\end{table}
\newpage

\begin{table}[h!]
\centering
 \begin{tabular}{||c | c | c | c | c | c ||} 
 \hline
  \multicolumn{5}{||c||}{2-cycles for $\Sigma^*_k$, with $k=$} \\
 \hline
 2 & 3 & 4 & 5 & 6 \\  [0.5ex] 
 \hline\hline
 - & -& - &- & \{152423224110,\,152413423110\} \\ [1ex]
 \hline
 \end{tabular}
 \caption{2 cycles for first few values of $k$}
\end{table}
\begin{table}[h!]
\centering
 \begin{tabular}{||c | c    ||} 

 \hline
  \multicolumn{2}{||c||}{2-cycles for $\Sigma^*_k$, with $k=$} \\
 \hline
 7&8\\  [0.3ex] 
 \hline\hline
  \{152423224110,\,152413423110\}&\{152423224110,\,152413423110\}     \\ 
\{162423224110,\,162413423110\}&\{162423224110,\,162413423110\}\\
\{161524232241,\,161524134231\}&\{172423224110,\,172413423110\}\\
&\{192423224110,\,192413423110\}\\
&\{161524232241,\,161524134231\}\\
&\{171524232241,\,171524134231\}\\
&\{171624232241,\,171624134231\}\\[0.5ex]
 \hline
 \end{tabular}
 \caption{2 cycles for some more values of $k$}
\end{table}
\begin{table}[h!]
\centering
 \begin{tabular}{||c | c    ||} 

 \hline
  \multicolumn{2}{||c||}{Some* 2-cycles for $\Sigma^*_k$, with $k=$} \\
 \hline
 9&10\\  [0.3ex] 
 \hline\hline
  \{152423224110,\,152413423110\}&\{152423224110,\,152413423110\}     \\ 
\{162423224110,\,162413423110\}&\{162423224110,\,162413423110\}\\
\{172423224110,\,172413423110\}&\{172423224110,\,172413423110\}\\
\{182423224110,\,182413423110\}&\{182423224110,\,182413423110\}\\
\{161524232241,\,161524134231\}&\{192423224110,\,192413423110\}\\
\{171524232241,\,171524134231\}&\{161524232241,\,161524134231\}\\
\{181524232241,\,181524134231\}&\{171524232241,\,171524134231\}\\
\{171624232241,\,171624134231\}&\{181524232241,\,181524134231\}\\
\{181624232241,\,181624134231\}&\{191524232241,\,191524134231\}\\
\{181724232241,\,181724134231\}&\{171624232241,\,171624134231\}\\
&\{181624232241,\,181624134231\}\\
&\{191624232241,\,191624134231\}\\
&\{181724232241,\,181724134231\}\\
&\{191724232241,\,191724134231\}\\
&\{191824232241,\,191824134231\}\\[0.5ex]
 \hline
 \end{tabular}
 \caption{2 cycles for more values of $k$, up to length $\lceil\alpha_k\rceil/2$, $\lceil\alpha_k\rceil/3$ respectively}
\end{table}
\clearpage
\begin{table}[h!]
\centering
 \begin{tabular}{||c | c | c | c | c  ||} 
 \hline
  \multicolumn{5}{||c||}{3-cycles for $\Sigma^*_k$, with $k=$} \\
 \hline
 2 & 3 & 4 & 5 & 6 \\  [0.5ex] 
 \hline\hline
 - & \{10210110,\,12111100,\,1212120\}& - &- & - \\ [1ex]
 \hline
 \end{tabular}
 \caption{3 cycles for first few values of $k$}
\end{table}
\begin{table}[h!]
\centering
 \begin{tabular}{||c  ||} 
 \hline
  \multicolumn{1}{||c||}{Some *3-cycles for $\Sigma^*_k$, with $k=$} \\
 \hline7,8,9,10 \\  [0.5ex] 
 \hline\hline
\{16251413424110,\,16153413225110,\,16251423225110\}  \\ [1ex]
 \hline
 \end{tabular}
 \caption{3 cycles for more values of $k$, up to length $\lceil\alpha_k\rceil/2$, $\lceil\alpha_k\rceil/3$ for $k=9,10$ respectively.}
\end{table}
\clearpage

\end{document}